\newcommand{\Sup}{\mathcal{S}up}
\DeclareMathOperator{\LMod}{\!-Mod}
\DeclareMathOperator{\RMod}{Mod-\!}
\DeclareMathOperator{\BMod}{\!-Mod-\!}
\begin{document}
\title{Quantum triads: an algebraic approach}
\author{David Kruml\thanks{Supported by the Grant Agency
of the Czech Republic under the grant No. 201/06/0664.}}
\maketitle

{\bf Keywords:} Quantale, quantale module, couple
of quantales, quantum frame, quantum triad.

\begin{abstract}
A concept of quantum triad and its solution
is introduced.
It represents a common framework for several
situations where we have a quantale with a right
module and a left module, provided with
a bilinear inner product.
Examples include Van den Bossche quantaloids, quantum
frames, simple and Galois quantales,
operator algebras, or orthomodular lattices.
\end{abstract}

\section{Introduction}
The lattice of ideals of a commutative ring is
a very useful characteristic and provides to apply
topological techniques in ring theory.
When the ring is non-commutative, the two-sided ideals ($T$)
are much less descriptive
and we should rather consider lattices of right ($R$) or
left ($L$) ideals, or even better all of them.

The lattices are naturally equipped by an associative
multiplication which distributes over joins and
this led J.~C.~Mulvey to introduce a concept of
\emph{quantale}.
Quantales arising from right ideals were studied
by many authors
(see cf. \cite{BoRoVB89,CoMi01,Roma05}, etc.).
In \cite{VB95} (see also \cite{St03})
G.~Van~den~Bossche advanced an idea of
F.~W.~Lawvere to consider the lattices $L,T,R$ as hom-sets
in a two-object quantaloid together with a lattice $Q$
of all subgroups which are modules of a center:
\begin{equation}\label{law}
\xymatrix{ \bullet \ar@(ur,ul)[rr]^L \ar@(ul,dl)[]_Q & &
\bullet \ar@(dr,ur)[]_T \ar@(dl,dr)[ll]^R }
\end{equation}
Notice that $R,L$ are then considered as modules
rather than quantales and that the scheme preserves
a quite important multiplication $L\times R\to T$.
This importance is well visible when we are dealing with
an operator algebra $A$ and the ideals are
realized by projections
--- for projections $p,q$ the more interesting product
$ApqA$ is obtained by multiplying left ideal $Ap$
with right ideal $qA$, while right quantale structure on $R$
gives only $pAqA$ which can be calculated as
a right action of two-sided $AqA$ on $pA$.
This fact was recognized by J.~Rosick\'y \cite{Rosi89}
and studied in a context of \emph{quantum frames}.

The aim of this paper is to formalize the
relationship among one- and two- sided ideals
and to construct a quantale subsuming the
structure. 
Dropping $Q$ from the Van den Bossche quantaloid
but preserving all remaining compositions
and associative laws we obtain a basic
example of what is called \emph{quantum triad}
$(L,T,R)$.
The quantum triads, or shortly just triads,
can be understood as multiplicative
Chu spaces but notice that morphisms
of triads (which are not studied here) would
arise from ring morphisms and this is
a different philosophy then the one of Chu spaces.

The fill-in by quantale $Q$ in (\ref{law})
is an instance of \emph{solution} of the triad $(L,T,R)$.
It is shown that every triad has
two extremal solutions denoted by $Q_0$ and $Q_1$
and they enclose a category of all solutions.
The solution $Q_0$ is realized by tensor product
$R\otimes_TL$ while $Q_1$ is a generalization
of a quantale of endomorphisms.
The results are based on a special case studied
in \cite{EgKr08} and further communication
with J.~Egger and the idea of P.~Resende \cite{Re04}
who constructed $Q_1$ for the case of Galois connections.
The quantales $Q_0,Q_1$ reflects two aspects of
the quantization of topology --- it is a non-commutative
intersection represented by multiplication on $Q_0$, and
transitivity of states represented by actions of $Q_1$
on $L$ and $R$.

We discuss properties when $L,T,R$ appears as left-, two-, and 
right-sided elements of a solution, that is when
the solution represents a unique object
covering all components of the triad.
A special attention is kept to
involutive ($L\cong R$) and Girard ($L\op\cong R$)
triads.
In particular, solutions of a triad given by a complete
orthomodular lattice represents a contribution to
topics of dynamical aspects of quantum logic \cite{CoMoWi00}.

Since one can find other examples of quantum triads
outside lattice theory, it is reasonable to work
with a maximal generality.
The author presents here only applications
in sup-lattices and uses an algebraic language.
A categorical approach will be presented in a separate
paper \cite{Kr08}.

\section{Preliminaries}
Recall that a \emph{category of sup-lattices}
consists of complete lattices as objects
and suprema preserving maps as morphisms.
Every sup-lattice morphism $f:S\to S'$ has
an \emph{adjoint} $f\adj:S'\to S$ given by
$f(x)\leq y\LRa x\leq f\adj (y)$
which preserves all infima.
By dualizing $S$ and $S'$ we obtain a
sup-lattice morphism denoted by $f^*:(S')\op\to S\op$.
A map $f:S\times S'\to S''$ of sup-lattices is
called a \emph{bimorphism} if it is a morphism
in both variables, i.e. fixing an element of
$x\in S$ (or $y\in S'$) we obtain morphism
$f(x,-):S'\to S''$ (or $f(-,y):S\to S'')$.
When the bimorphism is apparent, an element
$f(x,y)$ is understood as a products and
denoted by $xy$.
The adjoints of $f(x,-),f(-,y)$ are referred
as \emph{residuations} and denoted by $-\la x,y\ra -$.

A \emph{quantale} $Q$ is a sup-lattice equipped
by an associative bimorphism $Q\times Q\to Q$.
The top or bottom element is denoted by $1$ or
$0$, respectively.
The quantale is called

\emph{unital} if it admits a unit $e\in Q$, i.e.
$qe=q=eq$ for every $q\in Q$,

\emph{semiunital} if $q\leq q1\wedge 1q$ for
every $q\in Q$,

\emph{strictly two-sided} if it is unital
and the unit coincide with the top element,

\emph{Girard} if admits an element $d\in Q$ which is
\emph{cyclic}, i.e. $qq'\leq d\LRa q'q\leq d$,
and \emph{dualizing}, i.e.
$q=d\la (q\ra d)=(d\la q)\ra d$, for every $q,q'\in Q$.
The element $q\comp=q\ra d=d\la q$ is regarded
as a \emph{complement} of $q$.

A quantale $Q$ is called \emph{involutive}
if it is equipped by a unary operation $^*$ of
\emph{involution} provided that $(q^*)^*=q$,
$(qq')^*=(q')^*q^*$ and $(\bigvee q_i)^*=\bigvee q_i^*$
for all $q,q',q_i\in Q$.

An element $q\in Q$ is said to be \emph{right-,
left-,} or \emph{two-sided} if $q1\leq q,
1q\leq q$ or both the inequalities hold,
respectively.
The respective sup-lattices are denoted by
$\R(Q),\LL(Q),\T(Q)$.
Recall that every unital quantale is semiunital
and in a semiunital quantale it holds that
$r1=r,1l=l$ for all $r\in\R(Q),l\in\LL(Q)$.
Since $1^*=1$ in any involutive quantale,
the involution provides a sup-lattice
isomorphism between $\R(Q)$ and $\LL(Q)$.

A sup-lattice morphism $f:Q\to K$ between
quantales $Q,K$ is called a
\emph{(involutive) quantale morphism} if it
preserves the multiplication (and the involution),
i.e. $f(qq')=f(q)f(q')$ (and $f(q^*)=f(q)^*$).
The morphism $f$ is called \emph{strong}
if it preserves the top element, i.e.
$f(1_Q)=1_K$.
It follows easily that a strong morphism preserve
also right- and left-sided elements.

The two-element sup-lattice $\Two=\{ 0,1\}$,
as well as any frame, will be
regarded as a unital (involutive) quantale
with multiplication $\wedge$ (and trivial involution).

A sup-lattice $M$ is called a \emph{left $Q$-module}
if there is a bimorphism $Q\times M\to M$ associative
with the quantale multiplication, i.e.
$(qq')m=q(q'm)$ for every $q,q'\in Q,m\in M$.
$M$ is said to be unital when $Q$ is unital and
$em=m$ for every $m\in M$.
Right modules are defined in an analogous way.
$M$ is called a \emph{$(Q,Q')$-bimodule} for
quantales $Q,Q'$ if it is left $Q$-module,
right $Q'$-module and it holds that $(qm)q'=q(mq')$
for every $q\in Q,m\in M,q'\in Q'$.
Notice that every quantale $Q$ is automatically
$(Q,Q)$-bimodule.
Categories of left $Q$-modules, right $Q'$-modules
and $(Q,Q')$-bimodules are denoted by $Q\LMod,\RMod Q,Q\BMod Q'$,
respectively.
A sup-lattice morphism $f:M\to M'$ between two
left $Q$-modules is called a \emph{$Q$-module morphism}
if $f(qm)=qf(m)$ for every $q\in Q,m\in M$, etc.

Quantales $C,Q$ together with a morphism $\phi:C\to Q$
are called a \emph{couple of quantales} if
$C$ is a $(Q,Q)$-bimodule, $\phi$ is a $(Q,Q)$-bimodule
morphism and it holds that
$cc'=\phi(c)c'=c\phi(c')$ for every $c,c'\in C$.

A couple $C\stackrel{\phi}{\to}Q$ is said to be

\emph{unital} if $Q$ is unital and $C$ is a unital
$(Q,Q)$-module,

\emph{Girard} if $C$ admits a cyclic dualizing element $d$,
but now with respect to the bimodule actions, i.e.
$qc\leq d\LRa cq\leq d$,
$q=d\la (q\ra d)=(d\la q)\ra d$, where
the residuations are calculated as adjoints
of $q-,-q:C\to C$, and $c=d\la (c\ra d)=(d\la c)\ra d$
with residuations adjoint to $c-,-c:Q\to C$,
for all $q\in Q,c\in C$.

\section{Triads and solutions}

\begin{defin}
A \emph{(quantum) triad} 
consists of the following data:
\begin{itemize}
\item quantale $T$,
\item right $T$-module $R$,
\item left $T$-module $L$,
\item and bimorphism
$L\times R\to T$ compatible with the module actions,
\end{itemize}
i.e., there are four bimorphisms,
referred as (TT, RT, TL, LR), satisfying
all the five reasonable associative laws
(TTT, TTL, RTT, LRT, TLR).
\end{defin}

\begin{defin}
A quantale $Q$ is said to be a \emph{solution}
of triad $(L,T,R)$ if
\begin{itemize}
\item $R$ is a $(Q,T)$-bimodule,
\item $L$ is a $(T,Q)$-bimodule,
\item there is a compatible bimorphism $R\times L\to Q, (r,l)\mapsto rl$,
\end{itemize}
which means that  
we add further four bimorphisms
(QQ, QR, LQ, RL) and
require all the remaining associative laws
(QQQ, LQQ, QQR, TLQ, QRT, QRL, RLQ, RTL, LQR, RLR, LRL)
for scheme (\ref{law}).
\end{defin}

\begin{example}\label{ex}
(1) Let $Q$ be a quantale. Then $(\LL(Q),\T(Q),\R(Q))$
is a triad and $Q$ is a solution.

(2) Let $A$ be a ring. As mentioned in Introduction
we assume $L,T,R$ to be sup-lattices of left-,
two-, and right-sided ideals.
As solution we can consider a quantale of all additive
subgroups of $A$ or a quantale of those subgroups
that are modules over the center of $A$.

(3) When $A$ is a C*-algebra, it is possible to
consider only ideals closed in norm topology.
Then spectrum $\Max A$ consisting of all closed
subspaces \cite{MuPe01}
is a solution.

(4) Let $S$ be a sup-lattice.
Putting
$$ xy=\begin{cases}
0, & y\leq x, \\ 1, & y\nleq x
\end{cases} $$
for $x,y\in S$ we obtain a bilinear map $S\op\times S\to\Two$.
Since every sup-lattice is a unital $\Two$-module,
we get a triad $(S\op,\Two,S)$.
Quantale $\Q(S)$ of all sup-lattice endomorphisms \cite{PeRo97}
of $S$ and quantale $\C(S)=S\otimes S\op$ are clearly
solutions of the triad \cite{EgKr08}.

(5) Let $H$ be a Hilbert space.
Then left ideals of operator algebra $\B(H)$
closed in normal topology,
as well as those right ideals,
can be identified with closed subspaces of $H$.
The sup-lattice is denoted by $L(H)$.
The only closed two-sided ideals are $\{ 0\}$ and $A$,
hence we obtain a triad $(L(H),\Two,L(H))$
which is a special case of (4).
Except $\Q(L(H)),\C(L(H))$ there are also solutions
$\Max\B(H), \Max_{\sigma w}\B(H)$
and $\Max_1\C(H)$ (see \cite{EgKr08}).

(6) Let $S,S'$ be sup-lattices and $f:S\to S',g:S'\to S$
Galois connections, i.e. $f(x)\leq y\LRa x\geq g(y)$
for every $x\in S,y\in S'$.
In that case we write $x\bot y$ and
put $xy =0$, or $1$ otherwise.
We have obtained a triad $(S,\Two ,S')$. 
\emph{Galois quantale}
$Q=\{ (\alpha,\beta)\in\Q(S)\otimes\Q(S')\mid
\alpha(x)\bot y\LRa x\bot\beta (y)\} $
constructed in \cite{Re04}
is a solution of the triad.
Notice that (4) is a special case for
duality $f:S\to S\op,g:S\op\to S$.

(7) For a quantum frame $F$ (see \cite{Rosi89})
we consider triad $(F,\tilde{F},F)$
where $\tilde{F}$ is a frame of two-sided
elements of $F$, and actions are defined by
\begin{align*}
xz=zx &=x\wedge z, & \text{(RT, TL)} & &
xy &= x\circ y & \text{(LR)}
\end{align*}
for $x,y\in F, z\in\tilde{F}$.
As elements of a quantum frame represent
q-open sets in quantized topology (see \cite{GiKu71}),
a solution of the triad provides a ``dynamical logic
of quantised topology''.
In contrast to other candidates like $\Q(F)$,
solutions respect the underlying
classical topology represented by
two-sided elements (central q-open sets).

(8) A special instance of (7) (and generalization
of (5)) is a complete orthomodular lattice $M$.
The quantum frame structure defined by J.~Rosick\'y
yields a triad $(M,Z(M),M)$ where $Z(M)$ is
a center of $M$.
Notice that $x\circ y$ can be calculated also
as $|x\,\dot{\wedge}\, y|$, i.e. a central cover of skew meet
(also known as Sasaki projection $\phi_x(y)=x\,\dot{\wedge}\, y=
(x\vee y\comp)\wedge y$), 
and $x\,\dot{\wedge}\, y$ coincide with $x\wedge y$
whenever $x$ or $y$ is central.
This suggests further examples emerging from
skew operations.
\end{example}

Let us recall that $R\otimes L$ is calculated as
a free sup-lattice on $R\times L$ factorized by
congruence generated by relations
\begin{align*}
\bigvee (r_i,l) &\sim \left(\bigvee r_i,l\right), &
\bigvee (r,l_i) &\sim \left(r,\bigvee l_i\right)
\end{align*}
for all $r,r_i\in R,l,l_i\in L$.
Every element of $R\otimes L$ representable by some $(r,l)$
is called a \emph{pure tensor}
and denoted by $r\otimes l$.

\begin{lemma}
Let $(L,T,R)$ be an triad in $\Sup$.
Let $R\otimes_TL$ be a sup-lattice quotient of $R\otimes L$ via
$(rt)\otimes l\sim r\otimes (tl)$ assumed for any
$r\in R,l\in L,t\in T$.
Let $Q_0$ be defined as $R\otimes_TL$ with operations
\emph{
\begin{align*}
(r\otimes l)r' &= r(lr'), & \text{(QR)} & &
l(r\otimes l') &= (lr)l', & \text{(LQ)} \\
(r\otimes l)(r'\otimes l') &= (r(lr'))\otimes l', & \text{(QQ)} & &
rl &= r\otimes l & \text{(RL)}
\end{align*}
}
for $r,r'\in R,l,l'\in L$.
Then $Q_0$ is a solution of $(L,T,R)$.
\end{lemma}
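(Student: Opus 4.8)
The plan is to verify, in order, that the four proposed bimorphisms are well-defined on the quotient $R\otimes_T L$, then to check the eleven associative laws demanded by the definition of a solution, exploiting wherever possible the symmetry of the defining formulas and the associative laws already available in the triad. Concretely, I would first observe that the assignments (QR), (LQ), (QQ), (RL) are given on representatives $(r,l)$, so for each of them I must show (a) bimorphism in each slot separately, and (b) compatibility with the two factoring congruences, namely joins in $R$ and in $L$, and the relation $(rt)\otimes l \sim r\otimes(tl)$. Bimorphism in the $R\otimes_T L$ slot of (QR) is immediate since $(r\otimes l)\mapsto r(lr')$ is built from the sup-morphisms $r\mapsto r$, $l\mapsto lr'$, $\cdot\mapsto r(\cdot)$; compatibility with joins in $R$ and $L$ follows because LR, RT, and the action $R\times T\to R$ are bimorphisms; compatibility with the $T$-congruence is exactly the associative law TLR (for (LQ) it is LRT, for (QQ) it is again TLR, for (RL) it is the defining relation of $R\otimes_T L$ itself). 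So step one is essentially bookkeeping against the five associativities (TT, RT, TL, LR) and (TTT, TTL, RTT, LRT, TLR) of the triad.

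Step two is the main bulk: checking the remaining associative laws (QQQ, LQQ, QQR, TLQ, QRT, QRL, RLQ, RTL, LQR, RLR, LRL). The strategy here is to push everything down to pure tensors: since pure tensors join-generate $Q_0$ and all the maps involved are sup-morphisms in every slot, it suffices to verify each identity on pure tensors $r\otimes l$ and on elements $r\in R$, $l\in L$, $t\in T$. For instance QQQ becomes, after expanding, $((r(lr'))(l'r''))\otimes l'' = (r((lr')(l'r'')))\otimes l''$ wait — more carefully, $\bigl((r\otimes l)(r'\otimes l')\bigr)(r''\otimes l'') = (r(l r')\,(l' r''))\otimes l''$ computed two ways, and equality reduces to associativity among the products $L\times R\to T$, $R\times T\to R$, i.e. to laws RTT/LRT/TLR of the triad combined with the already-checked well-definedness. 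Similarly QRT reduces to RT-associativity after using that $r(lr')\in R$, LQR and QRL pair an LR-product with a QQ- or QR-product and reduce to LRT/TLR, and RLR, LRL reduce to the triad relations LR being bimorphic together with the RL-definition $rl=r\otimes l$. Each identity is a short diagram chase once one commits to the substitution rules $(r\otimes l)(r'\otimes l')=(r(lr'))\otimes l'$, $rl = r\otimes l$, and the bimodule actions.

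I expect the genuine obstacle to be not any single identity but the discipline of confirming that the module structures themselves are coherent before one touches associativity — i.e. checking that (QR) and (LQ) make $R$ a right $Q_0$-module and $L$ a left $Q_0$-module, which requires QQR: $(r\otimes l)\bigl((r'\otimes l')r''\bigr) = \bigl((r\otimes l)(r'\otimes l')\bigr)r''$, and dually LQQ, since a failure of associativity of the action against the multiplication would invalidate the whole claim that $R$ is a $(Q_0,T)$-bimodule. The subtle point in QQR and LQQ is that one side involves a product landing in $R$ (namely $(r'\otimes l')r'' = r'(l'r'')$) and the other a product landing in $Q_0$ (namely $(r\otimes l)(r'\otimes l') = (r(lr'))\otimes l'$), and reconciling them uses both the factoring through $T$ and the triad law LRT; I would do these two first among the ``action'' laws. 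Finally one must confirm the $T$-$Q_0$ bimodule compatibilities QRT and TLQ (that the old $T$-action and the new $Q_0$-action commute on $R$ and on $L$ respectively), which follow from RTT and TTL of the triad together with the factoring relation — again routine but worth stating explicitly. Once all eleven laws are in hand, $Q_0$ satisfies the definition of a solution by construction.
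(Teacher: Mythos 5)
Your proposal is correct and follows the same route as the paper's (very terse) proof: extend the operations from pure tensors using that $R\otimes L$ is bifree on $R\times L$, check compatibility with the defining relation of $R\otimes_T L$, and verify all remaining associative laws on pure tensors using the five associativities of the triad. Your only slips are cosmetic — e.g.\ the $T$-congruence compatibility of (QR) needs RTT as well as TLR, and that of (LQ) needs TTL as well as LRT — but these are covered by the bookkeeping you describe.
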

\begin{proof}
Since pure tensors are generators of $R\otimes L$
which is ``bifree'' on $R\times L$,
the assignments extend to all elements in a unique way.
Correctness follows from definition of $R\otimes_TL$.
All the associative laws can be proved only
for pure tensors and the proof is straightforward.
\end{proof}

\begin{lemma}
Let $(L,T,R)$ be an triad in $\Sup$.
Put
$$Q_1=\{ (\alpha,\beta)\in T\LMod (L,L)\times \RMod T(R,R)
\mid \alpha(l)r=l\beta(r)\}$$
and
\emph{
\begin{align*}
(\alpha,\beta)r &= \beta(r), &\text{(QR)} &
& l(\alpha,\beta) &= \alpha(l), &\text{(LQ)} \\
(\alpha,\beta)(\alpha',\beta') &= (\alpha'\alpha,\beta\beta'),
& \text{(QQ)} &
& rl &= ((-r)l,r(l-)). & \text{(RL)}
\end{align*}
}
Then $Q_1$ is a solution of $(L,T,R)$.
\end{lemma}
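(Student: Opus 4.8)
The plan is to verify in three blocks that $Q_1$ is a quantale, that the displayed actions make $R$ into a $(Q_1,T)$-bimodule and $L$ into a $(T,Q_1)$-bimodule, and that the eleven associative laws (QQQ, LQQ, QQR, TLQ, QRT, QRL, RLQ, RTL, LQR, RLR, LRL) hold. Every step will reduce to the defining identity $\alpha(l)r=l\beta(r)$ of the pairs lying in $Q_1$, to the five associative laws of the triad (TTT, TTL, RTT, LRT, TLR), and to the fact that the two module actions and the pairing $L\times R\to T$ are bimorphisms, hence preserve joins.

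First I would check that $Q_1$ is a sub-sup-lattice of $T\LMod(L,L)\times\RMod T(R,R)$, with joins computed pointwise: if $(\alpha_i,\beta_i)\in Q_1$ then $(\bigvee_i\alpha_i)(l)\,r=\bigvee_i(\alpha_i(l)\,r)=\bigvee_i(l\,\beta_i(r))=l\,(\bigvee_i\beta_i)(r)$, and the bottom $(0,0)$ also lies in $Q_1$. Next, the multiplication takes values in $Q_1$, since $(\alpha'\alpha)(l)\,r=\alpha'(\alpha(l))\,r=\alpha(l)\,\beta'(r)=l\,\beta(\beta'(r))=l\,(\beta\beta')(r)$ and $\alpha'\alpha,\beta\beta'$ are again left- resp.\ right-$T$-module endomorphisms; it is associative because composition is, which is (QQQ), and it is a bimorphism because composition of suprema-preserving maps distributes over joins in each variable. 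Hence $Q_1$ is a quantale.

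For the actions, $(\alpha,\beta)r=\beta(r)$ is a bimorphism (join-preserving in $r$ because $\beta$ is, and in $(\alpha,\beta)$ because joins in $Q_1$ are pointwise), it is associative with the quantale multiplication by $((\alpha,\beta)(\alpha',\beta'))r=(\beta\beta')(r)=\beta(\beta'(r))=(\alpha,\beta)((\alpha',\beta')r)$, which is (QQR), and with the right $T$-action by $((\alpha,\beta)r)t=\beta(r)t=\beta(rt)=(\alpha,\beta)(rt)$ since $\beta$ is a $T$-module morphism, which is (QRT); symmetrically $l(\alpha,\beta)=\alpha(l)$ gives the $(T,Q_1)$-bimodule structure on $L$ together with (LQQ) and (TLQ). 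For the pairing $rl=((-r)l,\,r(l-))$ the key point is that it really lands in $Q_1$: for all $l',r'$ one needs $((l'r)l)\,r'=l'\,(r(lr'))$, which is (TLR) followed by (LRT); it is a bimorphism in each variable because the triad pairing and the module actions are. The remaining laws are then short: (LQR) is exactly $\alpha(l)r=l\beta(r)$; (QRL) and (RLQ) are checked componentwise, using the defining identity in one coordinate and the module-morphism property of $\alpha$ or $\beta$ in the other; (RTL) is the triad's associative laws applied componentwise; and (RLR), (LRL) follow by unwinding definitions, since $(rl)r'=(r(l-))(r')=r(lr')$ and $l(rl')=((-r)l')(l)=(lr)l'$.

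The main obstacle I expect is not any individual associative law but the verification that $Q_1$ is genuinely closed under its own structure, i.e.\ that the ``adjointness'' constraint $\alpha(l)r=l\beta(r)$ is stable under composition and under arbitrary joins, together with the bookkeeping needed to confirm that composition in $\Sup$ is a bimorphism so that (QQ) distributes over joins; once those are settled, the eleven laws are a direct, if somewhat lengthy, unwinding of the four new bimorphisms against the five laws of the triad.
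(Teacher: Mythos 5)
Your proof is correct and follows essentially the same route as the paper's: closure of $Q_1$ under composition (and joins), membership of $rl=((-r)l,r(l-))$ in $Q_1$ via the triad's associative laws, and a componentwise unwinding of the remaining laws, with (LQR) reducing to the defining identity $\alpha(l)r=l\beta(r)$. You merely spell out the sup-lattice bookkeeping that the paper leaves implicit.
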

\begin{proof}
For every $l\in L,r\in R,(\alpha,\beta),(\alpha',\beta')\in Q_1$
we have $(\alpha'\alpha)(l)r=\alpha(l)\beta'(r)=(\beta\beta')(r)$,
hence $(\alpha,\beta)(\alpha',\beta')\in Q_1$.
The associative law (QQQ) evidently holds and $R,L$ are $Q_1$-modules
(QQR, LQQ).
Since elements of $Q_1$ are formed by $T$-module morphisms,
$R,L$ are also bimodules (QRT,TLQ).
Elements of the form $rl$ belongs to $Q_1$ thanks
to (TLR, TTL, LRT, RTT), and consequently (RLR, LRL, RTL) hold.
(LQR) follows by definition: $(l(\alpha,\beta))r=\alpha(l)r=l\beta(r)=
l((\alpha,\beta)r)$.
Finally, $((\alpha,\beta)r)l=\beta(r)l=((-\beta(r))l,\beta(r)(l-))=
((\alpha(-)r)l,\beta(r(l-)))=(\alpha,\beta)(rl)$ gives (QRL)
and similarly we would prove (RLQ).
\end{proof}

\begin{defin}
Let $C\stackrel{\phi}{\to}Q$ be a couple.
A quantale $K$ together with quantale morphisms
$\phi_0:C\to K,\phi_1:K\to Q$ such that $\phi_1\phi_0=\phi$
is called a \emph{couple factorization} if
the $K$-bimodule structure on $C$ obtained by
restricting scalars along $\phi_1$ makes
$\phi_0$ a coupling map. 
Namely, it holds that
\begin{align*}
\phi_0(\phi_1(k)c) &= k\phi_0(c), &
\phi_0(c\phi_1(k)) &= \phi_0(c)k
\end{align*}
for all $c\in C,k\in K$.
\end{defin}

\begin{theorem}\label{sol}
Let $Q$ be a solution of triad $(L,T,R)$.
The assignment $\phi(r\otimes l)=((-r)l,r(l-))$
determines a unital couple $Q_0\stackrel{\phi}{\to}Q_1$,
and $Q$ is a solution of the triad iff
there is a couple factorization
$Q_0\stackrel{\phi_0}{\to}Q\stackrel{\phi_1}{\to}Q_1$.
\end{theorem}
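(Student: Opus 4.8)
The plan is to dispatch the couple statement first, then prove the two directions of the equivalence, with the backward direction carrying most of the structural weight.

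\emph{The couple $Q_0\xrightarrow{\phi}Q_1$.} The formula $\phi(r\otimes l)=((-r)l,r(l-))$ is literally the value of the product $rl$ taken inside $Q_1$, so $\phi$ is the unique $\Sup$-morphism extending the bimorphism $(r,l)\mapsto rl$ of $Q_1$; well-definedness on the $T$-balanced quotient $R\otimes_T L$ amounts to the identity $(rt)l=r(tl)$ in $Q_1$, which unwinds coordinatewise to the triad laws (LRT), (TTL), (TLR), (RTT). That $\phi$ is multiplicative, $\phi((r\otimes l)(r'\otimes l'))=\phi(r(lr')\otimes l')=(r(lr'))l'=(rl)(r'l')$, is the instance of the solution identities for $Q_1$ coming from (QRL) and (RLR). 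For the couple structure I would put the $Q_1$-action on $Q_0$ by $(\alpha,\beta)(r\otimes l)=\beta(r)\otimes l$ and $(r\otimes l)(\alpha,\beta)=r\otimes\alpha(l)$; these descend to $R\otimes_T L$ because $\alpha,\beta$ are $T$-module morphisms, and they make $Q_0$ a $(Q_1,Q_1)$-bimodule. The bimodule-morphism property of $\phi$ and the coupling identities $cc'=\phi(c)c'=c\phi(c')$ are then short pure-tensor computations using the constraint $\alpha(l)r=l\beta(r)$ defining $Q_1$ and the $T$-balancing in $Q_0$. Since $Q_1$ is unital with unit $(\mathrm{id}_L,\mathrm{id}_R)$ and $Q_0$ is a unital $Q_1$-module, the couple is unital.

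\emph{If $Q$ is a solution, there is a factorization.} Define $\phi_0\colon Q_0\to Q$ as the $\Sup$-morphism induced by the bimorphism $R\times L\to Q$ of the solution; it factors through $R\otimes_T L$ by (RTL) and is multiplicative by (QRL)$+$(RLR), exactly as for $\phi$. Define $\phi_1\colon Q\to Q_1$ by $\phi_1(q)=((-)q,q(-))$, i.e. $l\mapsto lq$ and $r\mapsto qr$: the first is a left $T$-module endomorphism of $L$ by (TLQ), the second a right $T$-module endomorphism of $R$ by (QRT), and the compatibility $(lq)r=l(qr)$ is (LQR), so $\phi_1(q)\in Q_1$. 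Since the defining relation of $Q_1$ is stable under joins, $Q_1$ carries pointwise joins and $\phi_1$ is a $\Sup$-morphism; it is multiplicative by (LQQ)$+$(QQR). The identity $\phi_1\phi_0=\phi$ reduces to $l''(rl)=(l''r)l$ and $(rl)r''=r(lr'')$, i.e. (LRL) and (RLR). Finally, restriction of scalars along $\phi_1$ gives $\phi_1(q)(r\otimes l)=(qr)\otimes l$ and $(r\otimes l)\phi_1(q)=r\otimes(lq)$ in $Q_0$, so the coupling conditions $\phi_0(\phi_1(q)c)=q\phi_0(c)$ and $\phi_0(c\phi_1(q))=\phi_0(c)q$ become $(qr)l=q(rl)$ and $r(lq)=(rl)q$, i.e. (QRL) and (RLQ). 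Hence $Q_0\xrightarrow{\phi_0}Q\xrightarrow{\phi_1}Q_1$ is a couple factorization.

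\emph{Conversely, a factorization makes $Q$ a solution.} Given $Q_0\xrightarrow{\phi_0}Q\xrightarrow{\phi_1}Q_1$, make $R$ a $(Q,T)$-bimodule and $L$ a $(T,Q)$-bimodule by restricting scalars along $\phi_1$ (so $qr=\phi_1(q)r$, $lq=l\phi_1(q)$), and take $R\times L\to Q$ to be $(r,l)\mapsto\phi_0(r\otimes l)$. Each of the eleven associative laws for scheme~(\ref{law}) then follows mechanically: (QQQ), (LQQ), (QQR), (TLQ), (QRT) from the fact that $\phi_1$ is a quantale morphism and $R,L$ are $(Q_1,T)$- and $(T,Q_1)$-bimodules; (RTL) from $T$-balancing in $Q_0$; (LQR) from the constraint defining $Q_1$; (RLR), (LRL) from $\phi_1\phi_0=\phi$ together with the explicit $Q_1$-actions on $R$ and $L$; and (QRL), (RLQ) from the coupling equations, using $\phi_1(q)(r\otimes l)=(qr)\otimes l$, $(r\otimes l)\phi_1(q)=r\otimes(lq)$ so that, for instance, $(qr)l=\phi_0((qr)\otimes l)=\phi_0(\phi_1(q)(r\otimes l))=q\phi_0(r\otimes l)=q(rl)$. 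The bulk of the work is this family of diagram chases, none individually hard; the only genuine content is the choice of $\phi_0$ and $\phi_1$ — $Q_0$ is the solution freely built from the pairing of $R$ with $L$ and $Q_1$ is the solution assembled from all compatible action pairs, so every solution sits strictly between them — and the observation that the coupling condition is precisely what is needed to glue an intermediate quantale into an honest solution. I expect the fiddliest single point to be checking well-definedness of $\phi$ (and of $\phi_0$) on the $T$-balanced quotient and that the $Q_1$-action descends to $R\otimes_T L$, since this is the one place where the triad laws, rather than the solution laws, are used.
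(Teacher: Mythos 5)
Your proposal is correct and follows essentially the same route as the paper's proof: $\phi_0$ induced by the pairing $R\times L\to Q$, $\phi_1(q)=(-q,q-)$, and a case-by-case matching of the eleven associative laws against the quantale-morphism, balancing, composition, and coupling conditions. The only (immaterial) divergence is bookkeeping — e.g.\ you attribute (RLR), (LRL) to $\phi_1\phi_0=\phi$ where the paper cites the coupling property of $\phi_0$ — and you spell out the well-definedness of $\phi$ and of the $Q_1$-action on $R\otimes_TL$ in more detail than the paper does.
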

\begin{proof}
From (RTL, LRL, TLR, RLR, LRT) it follows that
whenever $Q$ is a solution
then $\phi_0:Q_0\to Q$ given by
$\phi_0(r\otimes l)=rl$ is a correctly defined
quantale morphism and together with (LQQ, QQR, QRL, RLQ) it
determines a couple with actions
$q(r\otimes l)=(qr)\otimes l$ and
$(r\otimes l)q=r\otimes (lq)$.
In particular $\phi:Q_0\to Q_1$ is a couple
and it is unital since $(id_L,id_R)\in Q_1$.
Further, if $Q$ is a solution then
(LQQ, QQR, TLQ, QRT, LQR) yield that
$\phi_1(q)=(-q,q-)$ defines a quantale
morphism $\phi_1:Q\to Q_1$.
Clearly, $\phi=\phi_1\phi_0$ and
$\phi_0(\phi_1(q)(r\otimes l))=\phi_0((\phi_1(q)r)\otimes l)=
(qr)l=q(rl)=q\phi_0(r\otimes l)$ and similarly
$\phi_0((r\otimes l)\phi_1(q))=\phi_0(r\otimes l)q$,
hence $\phi=\phi_1\phi_0$ is a couple factorization.

Conversely, for a couple factorization $\phi=\phi_1\phi_0$
we put
\begin{align*}
lq &= l\phi_1(q), & \text{(LQ)} & &
qr &= \phi_1(q)r, & \text{(QR)} \\
rl &= \phi_0(r\otimes l). & \text{(RL)}
\end{align*}
Then (TLQ, QRT, LQR) follow immediatelly,
(LQQ, QQR) hold since $\phi_1$ is a quantale morphism,
(LRL, RLR, RTL) since $\phi_0$ is a coupling map,
and (QRL, RLQ) since $\phi=\phi_1\phi_0$
is a couple factorization.
\end{proof}

\begin{defin}
A triad $(L,T,R)$ is called

\emph{strong} if $l\leq (l1_R)1_L$ and $r\leq 1_R(1_Lr)$
for every $l\in L,r\in R$,

\emph{unital} if $T$ is a unital quantale and
$L,R$ are unital $T$-modules, and

\emph{strict} if it is strong, unital, and $1_L1_R=e_T$.
\end{defin}

\begin{remark}
The triad in Example \ref{ex} (1) is strict
iff $Q$ is semiunital.
Examples (2 -- 8) provide strict triads.
\end{remark}

\begin{prop}\label{str}
(1) $\phi:Q_0\to Q_1$ is strong iff
$(L,T,R)$ is strong.

(2) If $(L,T,R)$ is strict,
then $\LL(Q_0)\cong\LL(Q_1)\cong L,
\R(Q_0)\cong\R(Q_1)\cong R$ (as modules over any solution)
and $\T(Q_0)\cong\T(Q_1)\cong T$ (as quantales).
In particular, $T$ is strictly two-sided.
\end{prop}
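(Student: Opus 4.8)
For (1) and (2) the plan is to argue directly from the explicit presentations of $Q_0=R\otimes_TL$ and $Q_1$, exploiting that under strictness the top elements $1_{Q_0}=1_R\otimes 1_L$ and $1_{Q_1}$ are easy to control and behave like one-sided units.

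For (1) I would use that a quantale morphism is strong exactly when it preserves the top, so the statement reduces to $\phi(1_R\otimes 1_L)=1_{Q_1}$. Now $\phi(1_R\otimes 1_L)=(\alpha_0,\beta_0)$ where $\alpha_0(l)=(l1_R)1_L$ and $\beta_0(r)=1_R(1_Lr)$; this pair always belongs to $Q_1$, hence $(\alpha_0,\beta_0)\leq 1_{Q_1}$, so everything comes down to whether $(\alpha_0,\beta_0)$ is the greatest element of $Q_1$. If the triad is strong, then for every $(\alpha,\beta)\in Q_1$ one estimates $\alpha(l)\leq(\alpha(l)1_R)1_L=(l\beta(1_R))1_L\leq(l1_R)1_L=\alpha_0(l)$ --- using strongness, the defining identity $\alpha(l)1_R=l\beta(1_R)$ of $Q_1$, and $\beta(1_R)\leq 1_R$ --- and symmetrically $\beta\leq\beta_0$, so $(\alpha_0,\beta_0)=1_{Q_1}$. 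Conversely $(id_L,id_R)\in Q_1$, so if $\phi$ is strong then $(id_L,id_R)\leq(\alpha_0,\beta_0)$, which unwinds to precisely the two inequalities defining strongness.

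For (2), assume the triad strict; by (1), $\phi$ is strong, so tops are preserved and the description $1_{Q_1}=(\alpha_0,\beta_0)$ above applies. For $l\in L$ I would set $\lambda_l:=1_Rl\in Q_1$, i.e. $\lambda_l=(\mu_l,\nu_l)$ with $\mu_l(x)=(x1_R)l$, $\nu_l(r)=1_R(lr)$, and $q_l:=1_R\otimes l=1_Rl\in Q_0$. A routine check with the associative laws and the strictness identities $1_L1_R=e_T$, $e_Tl=l$, $re_T=r$ gives $1_{Q_1}\lambda_l=\lambda_l$, $1_{Q_0}q_l=q_l$ (so $\lambda_l,q_l$ are left-sided) and $1_L\lambda_l=1_Lq_l=(1_L1_R)l=l$. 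Conversely, for a left-sided $(\alpha,\beta)\in Q_1$ put $l:=\alpha(1_L)$: left-sidedness gives $\mu_l=\alpha\circ\alpha_0\leq\alpha$ (using that $\alpha$ is a $T$-module map to rewrite $\alpha((x1_R)1_L)=(x1_R)\alpha(1_L)$), strongness $x\leq(x1_R)1_L$ gives $\alpha\leq\mu_l$, and the defining identity together with strongness forces $\beta=\nu_l$; so $(\alpha,\beta)=\lambda_l$. The $Q_0$-analogue runs the same way, now using that $q\leq 1_{Q_0}q$ for every $q\in Q_0$ (immediate from strongness and the tensor relations). Hence $\LL(Q_0)=\{q_l\}$, $\LL(Q_1)=\{\lambda_l\}$, each sent isomorphically onto $L$ by $q\mapsto 1_Lq$ (inverse $l\mapsto 1_R\otimes l$, resp. $l\mapsto 1_Rl$); and since $\phi(q_l)=\lambda_l$, $\phi$ restricts to the isomorphism $\LL(Q_0)\cong\LL(Q_1)$. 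The right-sided case is symmetric. Equivariance over an arbitrary solution $Q$ follows from the description (Theorem~\ref{sol}) of the $Q$-actions on $Q_0,Q_1$ as restrictions of scalars along the factorization maps; e.g. $\lambda_l\cdot q=\lambda_{lq}$ by law (TLQ).

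Finally I would identify the two-sided elements. An element $\lambda_l$ is two-sided iff it is also right-sided; computing $\lambda_l1_{Q_1}$ and evaluating at $x=1_L$ forces $l=t1_L$ with $t:=l1_R$, while conversely $\lambda_{t1_L}=\rho_{1_Rt}\in\T(Q_1)$ for every $t\in T$ (where $\rho_r:=r1_L$, the right-sided analogue of $\lambda_l$). So $t\mapsto\lambda_{t1_L}$ is a join-preserving bijection $T\to\T(Q_1)$ with inverse $q\mapsto 1_Lq1_R$, and it is multiplicative: $\lambda_{t1_L}\lambda_{t'1_L}=\lambda_{(tt')1_L}$, which is where most of the bookkeeping sits --- one reduces the composite of the first components to $x\mapsto(x1_R)((tt')1_L)$ using (TLR), (TTL) and $t(t'1_L)=(tt')1_L$ --- and the same computation handles $Q_0$ with $q_t:=1_R\otimes(t1_L)$. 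Thus $\T(Q_0)\cong\T(Q_1)\cong T$ as quantales. Since the top $1_{Q_1}$ of $Q_1$ is two-sided it is the top of $\T(Q_1)$, and by the identities above it acts as a two-sided unit on $\LL(Q_1)$ and on $\R(Q_1)$, hence on $\T(Q_1)=\LL(Q_1)\cap\R(Q_1)$; so $\T(Q_1)$ is strictly two-sided, and transporting along the isomorphism, so is $T$. The main obstacle throughout is keeping the many associative laws straight in these reductions, above all in the two-sided computation.
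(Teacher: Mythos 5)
Your proof is correct and follows essentially the same route as the paper: identify $1_{Q_1}$ with $\phi(1_R\otimes 1_L)=((-1_R)1_L,\,1_R(1_L-))$, characterize the one-sided elements as $1_Rl$ and $r1_L$ and the two-sided ones as $1_Rt1_L$, and use the strictness identity $1_L1_R=e_T$ to recover $l$, $r$, $t$. Two small differences are worth noting: in (1) your argument that $(\alpha_0,\beta_0)$ dominates every $(\alpha,\beta)\in Q_1$ pointwise, via $\alpha(l)\leq(\alpha(l)1_R)1_L=(l\beta(1_R))1_L\leq\alpha_0(l)$, is more direct than the paper's computation of $1_{Q_1}r$ as a meet; and in (2) you verify the $Q_0$-side (that $q\leq 1_{Q_0}q$ and that left-sided elements of $R\otimes_TL$ are exactly the $1_R\otimes l$) explicitly, where the paper delegates the statement that $\phi$ is an isomorphism on one-sided elements to a citation of \cite{EgKr08}. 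One spot to tighten: in showing $\beta=\nu_l$ for a left-sided $(\alpha,\beta)$, the inequality $\nu_l\leq\beta$ needs the left-sidedness of the second component ($\beta_0\circ\beta\leq\beta$), not just the defining identity and strongness as your parenthetical suggests; the ingredients are all present in your sentence, but the $\leq$ direction should be attributed correctly.
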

\begin{proof}
(1) If $\phi$ is strong, then $r=e_{Q_1}r\leq 1_{Q_1}r=
\phi(1_{Q_0})r=1_{Q_0}r=(1_R\otimes 1_L)r=1_R1_Lr$
and similarly for $l\in L$.

Conversely, let $(L,T,R)$ be strong.
Then for $r,r'\in R$ we have
$r\leq 1_R1_Lr'\Ra 1_R1_Lr\leq (1_R1_L)^2r'=1_R1_Lr'$.
Since $1_{Q_1}$ acts as a $T$-module endomorphism,
$1_{Q_1}(1_R1_Lr)=1_{Q_1}(1_R)1_Lr=1_R1_Lr$.
We have $1_{Q_1}r=\bigwedge_{r\leq 1_R1_Lr'}1_R1_Lr'=1_R1_Lr=
(1_R\otimes 1_L)r=1_{Q_0}r$.
In a similar way we prove that $l1_{Q_1}=l1_{Q_0}$,
hence $\phi(1_{Q_0})=1_{Q_1}$. 

(2) Due to \cite{EgKr08}, $\phi$ is an
isomorphism on right- and left-sided
elements.
Assume that $\rho\in\R(Q_1)$ and put $r=\rho 1_R$.
Then $\rho =\rho 1_{Q_1}$ yields that
$\rho$ acts as $\rho 1_{Q_1}=\rho 1_R1_L=r1_L$
on both $L$ and $R$.
Since $(L,T,R)$ is strict, we can recover
$r=re_T=r1_L1_R$.
On the other hand,
$(-r1_L,r1_L-)=\phi(r\otimes 1_L)\in Q_1$
is right-sided because $r1_L1_R1_L=r1_L$.
Similarly we check left-sided elements.

Every element $t\in T$ can be associated with
$(-1_Rt1_L,1_Rt1_L-)$ which is clearly
both right- and left-sided in $Q_1$
and from $1_Rt1_L$ can be recovered as
$t=1_L1_Rt1_L1_R$.
Conversely, every $\tau\in\T(Q)$ yields
$t=1_L\tau 1_R$ and then acts as $1_Rt1_L$.
For $t,t'\in T$ we have $1_Rt1_L1_Rt'1_L=
1_Rtt'1_L$, hence we have obtained
a quantale isomorphism. 
\end{proof}

\begin{remark}
Recall that a quantale $Q$ is \emph{faithful} \cite{PeRo97}
if $\forall l\in\LL(Q),r\in\R(Q)\ lq=lq'$ and $qr=q'r$
implies that $q=q'$ for every $q,q'\in Q$. 

In the case of a strict triad we have obtained that
$Q_0$ is generated by its right- and left-sided elements
while $Q_1$ is faithful.
\end{remark}

\section{Central, involutive, and Girard triads}

\begin{defin} 
A triad $(L,T,R)$ is called \emph{central}
if $LR$ is a subset of the center $Z(T)$ of $T$,
i.e. $lrt=tlr$ for every $l\in L,t\in T,r\in R$.
\end{defin}

The following statement exhibits analogs
of an embedding of a center and of a trace
in operator algebras.

\begin{prop}
Let $(L,T,R)$ be a central triad.
Then assignment $t\mapsto (t-,-t)$ defines
a quantale morphism $\zeta:T\to Q_1$
and assignment $r\otimes l\mapsto lr$
a sup-lattice morphism $\tau:Q_0\to T$.
Elements of $\zeta(T)$ are central in $Q_1$
and elements of $\tau\adj(T)$ are cyclic
for any couple of solutions
$Q_0\stackrel{\phi_0}{\to}Q$.
\end{prop}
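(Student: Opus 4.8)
The plan is to verify each of the four asserted properties in turn, checking the relevant associative laws of the triad/solution data. The map $\zeta\colon T\to Q_1$ is a restriction-of-scalars along the trivial action in spirit: the pair $(t-,-t)$, meaning left-multiplication by $t$ on $L$ and right-multiplication by $t$ on $R$, must land in $Q_1$, i.e.\ $(t-)$ is a left $T$-module endomorphism of $L$, $(-t)$ is a right $T$-module endomorphism of $R$, and the compatibility $\alpha(l)r=l\beta(r)$ holds — here $(tl)r=t(lr)$ and $l(rt)=(lr)t$, so we need $t(lr)=(lr)t$, which is exactly centrality $lrt=tlr$. Module-endomorphism-ness uses associativity (TTL) and (RTT). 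Functoriality $\zeta(tt')=\zeta(t)\zeta(t')$ and sup-preservation are immediate from (TTT) and bilinearity; note the order-reversal built into (QQ) of $Q_1$, $(\alpha,\beta)(\alpha',\beta')=(\alpha'\alpha,\beta\beta')$, matches $(tt')l=t(t'l)$ on the $L$-side read as composition $(t-)\circ(t'-)$.

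Next, $\tau\colon Q_0\to T$, $r\otimes l\mapsto lr$, is well defined because the defining relation of $R\otimes_T L$ sends $(rt)\otimes l$ and $r\otimes(tl)$ to $(tl)r$ and $l(rt)$ respectively, and these agree by (LRT)/(RTL): $l(rt)=(lr)t$ while $(tl)r=t(lr)$ — wait, these need not be equal in general, so I would instead observe that for a \emph{central} triad we have an ambidextrous bilinear pairing and the map on the free sup-lattice $R\otimes L$ given by $(r,l)\mapsto lr$ is bilinear, hence factors through $R\otimes L$, and then respects the $T$-balancing relation precisely because centrality gives $l(rt)=(lr)t=t(lr)=(tl)r$, using (LRT), centrality, and (TLR). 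Sup-preservation is then automatic, giving the sup-lattice morphism $\tau$; its right adjoint $\tau\adj\colon T\to Q_0$ exists since we are in $\Sup$.

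For the centrality of $\zeta(T)$ in $Q_1$: given $t\in T$ and $(\alpha,\beta)\in Q_1$, I must show $\zeta(t)(\alpha,\beta)=(\alpha,\beta)\zeta(t)$, i.e.\ $(\alpha(t-),(-t)\beta)=((t-)\alpha,\beta(-t))$ as pairs of endomorphisms. On $L$ this reads $\alpha(tl)=t\alpha(l)$, which holds because $\alpha$ is a left $T$-module morphism; on $R$ it reads $\beta(rt)=\beta(r)t$, which holds because $\beta$ is a right $T$-module morphism. So centrality of $\zeta(T)$ is essentially a restatement of the defining module-morphism conditions on $Q_1$ — this step is short. The last claim, that every $\tau\adj(s)$ is cyclic in any solution $Q$ with coupling $\phi_0\colon Q_0\to Q$, is the one I expect to be the main obstacle: ``cyclic'' here should be read relative to the bimodule structure, with $d=\phi_0(\tau\adj(s))$, and I would unwind $q\,d\le ?$ versus $d\,q\le ?$ using the adjunction $\phi_0(r\otimes l)\le\phi_0(\tau\adj(s)) \Leftrightarrow lr\le s$ together with the identities (QRL), (RLQ), (QQR), (LQQ) governing how $q$ acts on pure tensors, $q(r\otimes l)=(qr)\otimes l$ and $(r\otimes l)q=r\otimes(lq)$. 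The key computation will be that $q\cdot\phi_0(r\otimes l)=\phi_0((qr)\otimes l)$ has $\tau$-value $l(qr)=(lq)r$, matching the $\tau$-value of $\phi_0(r\otimes(lq))=\phi_0(r\otimes l)\cdot q$, and centrality is what equates these with $l(qr)$ read on either side; pushing this through the adjunction $\tau\dashv\tau\adj$ yields $q\,d\le d\Leftrightarrow d\,q\le d$ after checking that a general element of $Q_0$ is a join of pure tensors and $\phi_0$, $\tau$ preserve joins. I would take care to state precisely in which couple/solution the word ``cyclic'' is meant (the problem says ``for any couple of solutions $Q_0\to Q$'', so $d=\phi_0(\tau\adj(s))\in Q$), and the main work is bookkeeping the six or so associative laws that make the two sides of the cyclicity equivalence coincide.
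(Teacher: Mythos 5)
Your overall strategy is the paper's: put $\zeta(t)$ into $Q_1$ via $tlr=lrt$, get multiplicativity from composition, derive centrality of $\zeta(T)$ from the $T$-module-morphism conditions built into $Q_1$, use centrality to make $\tau$ well defined on $R\otimes_TL$, and prove cyclicity from $\tau(qc)=\tau(cq)$ plus adjunction. The key computation you isolate, $\tau(q(r\otimes l))=l(qr)=(lq)r=\tau(r\otimes(lq))=\tau((r\otimes l)q)$, is exactly the one in the paper. However, the cyclicity step as you set it up contains genuine errors that would need repair.

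First, you locate the cyclic element in the wrong lattice and state the wrong condition. The element $d=\tau\adj(s)$ lives in $Q_0$, which plays the role of $C$ in the couple $Q_0\stackrel{\phi_0}{\to}Q$; cyclicity ``with respect to the bimodule actions'' means $qc\leq d\Leftrightarrow cq\leq d$ for all $q\in Q$ and all $c\in Q_0$ --- not ``$qd\leq d\Leftrightarrow dq\leq d$'', and $\phi_0$ plays no role at all. Consequently the ``adjunction'' $\phi_0(r\otimes l)\leq\phi_0(\tau\adj(s))\Leftrightarrow lr\leq s$ you propose is both unnecessary and false in general ($\phi_0$ need not reflect order); what is needed is only $c\leq\tau\adj(s)\Leftrightarrow\tau(c)\leq s$ for $c\in Q_0$, after which $qc\leq\tau\adj(s)\Leftrightarrow\tau(qc)\leq s\Leftrightarrow\tau(cq)\leq s\Leftrightarrow cq\leq\tau\adj(s)$, with $\tau(qc)=\tau(cq)$ checked on pure tensors and extended by join-preservation. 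Second, you misattribute the decisive identity: $l(qr)=(lq)r$ is the associativity law (LQR) of the solution, not centrality; centrality is used only to make $\tau$ factor through $R\otimes_TL$ (and to place $\zeta(t)$ in $Q_1$). A further caveat: your claim that (TTL) and (RTT) make $t-$ and $-t$ into $T$-module endomorphisms is not correct as stated, since $t(t'l)=t'(tl)$ is not an associativity law; the paper is silent on this point as well, and it genuinely requires the action of $t$ to commute with that of $T$ (as it does in the examples, where $T$ is a frame), so you should not present it as a consequence of associativity.
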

\begin{proof}
For every $t\in T$ and $r\in R, l\in L$ we have
$tlr=lrt$, hence $\zeta(t)\in Q_1$.
For $t,t'\in T$ we get
$\zeta(t)\zeta(t')=((t-)\circ (t'-),(-t')\circ (-t)=
(tt'-,-tt')=\zeta(tt')$.
Finally, for $(\alpha,\beta)\in Q_1$ we have
$\zeta(t)(\alpha,\beta)=(\alpha(t-),\beta(-)t)=
(t\alpha(-),\beta(-t))=(\alpha,\beta)\zeta(t)$.

Since $lr$ is central for every $l\in L,r\in R$,
$\tau(rt\otimes l)=lrt=tlr=\tau(r\otimes tl)$
yields correctness of $\tau$.
For $q\in Q$ we have $\tau(q(r\otimes l))=
\tau((qr)\otimes l)=lqr\leq t\LRa
\tau((r\otimes l)q)\leq t$, i.e. $\tau\adj(t)$
is cyclic.
\end{proof}

\begin{defin}
An triad $(L,T,R)$ is called \emph{involutive}
if $T$ is involutive and
there is an isomorphism $L\stackrel{*}\cong R$
such that 
\begin{align*}
(tl)^* &= l^*t^*, & (rt)^* &= t^*r^*, \\
(lr)^* &= r^*l^* & &
\end{align*}
for all $r\in R,l\in L,t\in T$.

A solution $Q$ of involutive triad $(L,T,R)$
is called \emph{involutive} if
$Q$ is an involutive quantale and
\begin{align*}
(qr)^* &= r^*q^*, & (lq)^* &= q^*l^*, \\
(rl)^* &= l^*r^* & &
\end{align*}
for all $q\in Q, r\in R, l\in L$.

A couple $C\stackrel{\phi}{\to}Q$ is
said to be \emph{involutive}
if $\phi$ is an involutive morphism of involutive
quantales and $(qc)^*=c^*q^*$ for every $q\in Q,c\in C$.
A couple factorization $C\stackrel{\phi_0}{\to}K
\stackrel{\phi_1}{\to}Q$ is called \emph{involutive}
if $K$ is an involutive quantale and $\phi_0,\phi_1$
are involutive morphisms.
\end{defin}

\begin{theorem}
If $(L,T,R)$ is an involutive triad, then
$Q_0\stackrel{\phi}{\to}Q_1$ is an involutive couple.
Involutive solutions correspond to involutive
couple factorizations of $\phi$.
\end{theorem}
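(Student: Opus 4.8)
The plan is to manufacture involutions on $Q_0$ and $Q_1$ out of the given sup-lattice isomorphism $*\colon L\cong R$ and then to check that every structure map respects them. On $Q_0=R\otimes_T L$ I would set $(r\otimes l)^*=l^*\otimes r^*$, which is legitimate since $l^*\in R$ and $r^*\in L$. This is well defined: $(r,l)\mapsto l^*\otimes r^*$ is a bimorphism $R\times L\to R\otimes_T L$ (in each variable a composite of $*$ with a tensor inclusion), and it identifies $(rt)\otimes l$ with $r\otimes(tl)$ because their images $l^*\otimes(t^*r^*)$ and $(l^*t^*)\otimes r^*$ agree by the $T$-relation in $R\otimes_T L$ (here one uses $(rt)^*=t^*r^*$ and $(tl)^*=l^*t^*$); hence it descends to $Q_0$, and it squares to the identity on pure tensors, so it is an involution. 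On $Q_1$ I would conjugate by $*$: put $(\alpha,\beta)^*=(\beta^\dagger,\alpha^\dagger)$ where $\beta^\dagger(l)=(\beta(l^*))^*$ for $l\in L$ and $\alpha^\dagger(r)=(\alpha(r^*))^*$ for $r\in R$; using the triad's involution axioms one checks $\beta^\dagger$ is a left $T$-module endomorphism of $L$ and $\alpha^\dagger$ a right $T$-module endomorphism of $R$.

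The one genuinely substantive point --- and the step I expect to be the main, if modest, obstacle --- is that $(\alpha,\beta)^*$ lies in $Q_1$, i.e.\ $\beta^\dagger(l)\,r=l\,\alpha^\dagger(r)$, since this is the only place where the $Q_1$-compatibility condition and all of the involution axioms of the triad must be combined at once. I would prove it by applying $*$ to both sides: using $(lr)^*=r^*l^*$ together with the cancellations $(\beta^\dagger(l))^*=\beta(l^*)$ and $(\alpha^\dagger(r))^*=\alpha(r^*)$, the equality becomes $\alpha(r^*)\,l^*=r^*\,\beta(l^*)$, which is precisely the defining relation of $Q_1$ evaluated at $(r^*,l^*)\in L\times R$; injectivity of $*$ on $T$ then gives the claim. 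With the involutions fixed, the remaining items are routine and I would dispatch them on pure tensors and pairs: that $*$ preserves joins and reverses products on $Q_0$ (a direct computation with the (QQ) multiplication formula and the axioms $(rt)^*=t^*r^*$, $(tl)^*=l^*t^*$) and on $Q_1$ (joins there are pointwise and $\dagger$ preserves them, and the product-reversal comes from the coordinate swap built into $*$, since $\dagger$ preserves composition); that $\phi(r\otimes l)=((-r)l,r(l-))$ is involutive, which after unwinding both coordinates of $\phi$ reduces to a couple of applications of $(rt)^*=t^*r^*$ and $(tl)^*=l^*t^*$; and that the couple $Q_0\stackrel{\phi}{\to}Q_1$ is involutive, i.e.\ $(qc)^*=c^*q^*$, which for $q=(\alpha,\beta)$ and $c=r\otimes l$ --- using the actions $q(r\otimes l)=(qr)\otimes l$ and $(r\otimes l)q=r\otimes(lq)$ recorded in Theorem~\ref{sol} --- collapses to the single identity $\beta(r)^*=\beta^\dagger(r^*)$, which holds by the very definition of $\dagger$.

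For the second assertion I would build on Theorem~\ref{sol}, which already provides the bijection between solutions of $(L,T,R)$ and couple factorizations of $\phi$; as the underlying quantale is literally the same object on both sides, it remains only to match the involutions. If $Q$ is an involutive solution, then $\phi_0(r\otimes l)=rl$ is an involutive morphism because $\phi_0((r\otimes l)^*)=l^*r^*=(rl)^*$ by the axiom $(rl)^*=l^*r^*$ (both sides preserve joins, so this extends off pure tensors), and $\phi_1(q)=(-q,q-)$ is involutive because, writing $\phi_1(q)=(\alpha,\beta)$ with $\alpha(l)=lq$, $\beta(r)=qr$, the solution axioms $(qr)^*=r^*q^*$ and $(lq)^*=q^*l^*$ give $\beta^\dagger(l)=(ql^*)^*=lq^*$ and $\alpha^\dagger(r)=(r^*q)^*=q^*r$, i.e.\ $\phi_1(q)^*=\phi_1(q^*)$; hence the couple factorization is involutive. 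Conversely, given an involutive couple factorization $Q_0\stackrel{\phi_0}{\to}K\stackrel{\phi_1}{\to}Q_1$, the solution structure $lq=l\phi_1(q)$, $qr=\phi_1(q)r$, $rl=\phi_0(r\otimes l)$ furnished by Theorem~\ref{sol} is involutive: $(rl)^*=\phi_0((r\otimes l)^*)=l^*r^*$ since $\phi_0$ is involutive, and writing $\phi_1(q)=(\alpha,\beta)$, so that $\phi_1(q^*)=\phi_1(q)^*=(\beta^\dagger,\alpha^\dagger)$, one gets $(qr)^*=(\beta(r))^*=\beta^\dagger(r^*)=r^*q^*$ and $(lq)^*=(\alpha(l))^*=\alpha^\dagger(l^*)=q^*l^*$. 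Since these two passages are inverse on the underlying data and transport the same involution, involutive solutions correspond exactly to involutive couple factorizations of $\phi$.
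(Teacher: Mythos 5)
Your proposal is correct and follows exactly the paper's route: the involution $(r\otimes l)^*=l^*\otimes r^*$ on $Q_0$ and the conjugated swap $(\alpha,\beta)^*=(\bar\beta,\bar\alpha)$ with $\bar\alpha(r)=\alpha(r^*)^*$, $\bar\beta(l)=\beta(l^*)^*$ on $Q_1$ are precisely the structures the paper introduces, and the correspondence of involutive solutions with involutive factorizations is obtained, as in the paper, by transporting these through Theorem~\ref{sol}. The paper dismisses all verifications as straightforward; you have simply carried them out, correctly.
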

\begin{proof}
On $Q_0$ put $(r\otimes l)^*=l^*\otimes r^*$.
On $Q_1$ put $(\alpha,\beta)^*=(\bar{\beta},\bar{\alpha})$
where $\bar{\alpha}(r)=\alpha(r^*)^*$ and
$\bar{\beta}(l)=\beta(l^*)^*$.
The rest follows straightforwards.
\end{proof}

A triad $(L,T,R)$ is said to be \emph{Girard}
if $T$ admits a cyclic element $d_T$
such that assignment
$$ r\leq l\comp (\LRa l\leq r\comp)\LRa lr\leq d_T $$
provides a duality $L\op\stackrel{\bot}{\cong}R$.

\begin{theorem}\label{gir}
If $(L,T,R)$ is a Girard triad,
then $Q_1\cong T\LMod (L,L)\cong\RMod T(R,R)$
and $\phi:Q_0\to Q_1$ is a Girard couple.
\end{theorem}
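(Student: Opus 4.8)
The plan is to prove the two assertions separately: first that the coordinate projections $\pi_1\colon Q_1\to T\LMod(L,L)$, $(\alpha,\beta)\mapsto\alpha$, and $\pi_2\colon Q_1\to\RMod T(R,R)$, $(\alpha,\beta)\mapsto\beta$, are isomorphisms of quantales; and then that $Q_0$ carries a cyclic dualizing element for the $Q_1$-bimodule action, so that $\phi$ is a Girard couple. Throughout I write $l\mapsto l\comp$ and $r\mapsto r\comp$ for the mutually inverse order-reversing bijections furnished by the duality $L\op\stackrel{\bot}{\cong}R$, so that $l\comp\comp=l$, $r\comp\comp=r$, $lr\leq d_T\LRa r\leq l\comp\LRa l\leq r\comp$, and in particular $\{l\comp\mid l\in L\}=R$; I also use that $d_T$ is cyclic and dualizing in $T$, so that complementation $t\mapsto t\comp$ is a bijection of $T$.

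For the first assertion I would show $\pi_1$ is bijective (the argument for $\pi_2$ being symmetric). Injectivity: if $(\alpha,\beta),(\alpha,\beta')\in Q_1$ then $l\beta(r)=\alpha(l)r=l\beta'(r)$ for all $l,r$, so $\beta(r)\leq l\comp\LRa\beta'(r)\leq l\comp$ for every $l$, and since $l\comp$ exhausts $R$ this forces $\beta=\beta'$. Surjectivity: given $\alpha\in T\LMod(L,L)$ with adjoint $\alpha\adj$, put $\beta(r):=\bigl(\alpha\adj(r\comp)\bigr)\comp$; this is a sup-lattice morphism (a composite of two order-reversing bijections around the meet-preserving $\alpha\adj$) with $\beta(r)\comp=\alpha\adj(r\comp)$, and I claim $(\alpha,\beta)\in Q_1$. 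One checks in turn: $(a)$ $\beta$ is a right $T$-module morphism --- from the triad's associativity laws and cyclicity of $d_T$ one obtains $(rt)\comp=t\backslash_L(r\comp)$, where $t\backslash_L(-)$ is the adjoint of $l\mapsto tl$ on $L$, and since $\alpha(tl)=t\alpha(l)$ the adjoint $\alpha\adj$ commutes with each $t\backslash_L(-)$, whence $\beta(rt)=\beta(r)t$; $(b)$ $l\beta(r)\leq d_T\LRa l\leq\beta(r)\comp=\alpha\adj(r\comp)\LRa\alpha(l)\leq r\comp\LRa\alpha(l)r\leq d_T$ for all $l,r$; $(c)$ this last equivalence is promoted to the equality $\alpha(l)r=l\beta(r)$ in $T$ --- since complementation is injective on $T$ it suffices that $(\alpha(l)r)s\leq d_T\LRa(l\beta(r))s\leq d_T$ for every $s\in T$, and this follows from $(\alpha(l)r)s=\alpha(l)(rs)$, $(l\beta(r))s=l\beta(rs)$ (using $(a)$), and $(b)$ applied with $rs$ in place of $r$. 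With bijectivity in hand, and since joins in $Q_1$ as well as the product $(\alpha,\beta)(\alpha',\beta')=(\alpha'\alpha,\beta\beta')$ are computed componentwise (composition in each coordinate, in the order dictated by the $Q_1$-actions), $\pi_1,\pi_2$ are quantale isomorphisms, giving $Q_1\cong T\LMod(L,L)\cong\RMod T(R,R)$.

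For the second assertion, recall from Theorem~\ref{sol} that the $Q_1$-bimodule structure on $Q_0=R\otimes_TL$ is $q(r\otimes l)=(qr)\otimes l$, $(r\otimes l)q=r\otimes(lq)$, which under $Q_1\cong T\LMod(L,L)$ reads $q(r\otimes l)=\beta_\alpha(r)\otimes l$, $(r\otimes l)q=r\otimes\alpha(l)$. I would take $d:=\bigvee_{l\in L}(l\comp\otimes l)=\bigvee\{\,r\otimes l\in Q_0\mid lr\leq d_T\,\}$ and verify, working on pure tensors (which generate $Q_0$), that $d$ is cyclic, $qc\leq d\LRa cq\leq d$, and that it satisfies the two families of dualizing identities (those involving elements of $Q_1$ and those involving elements of $Q_0$); each such check should reduce, via the duality $(-)\comp$, the triad laws and the identification $Q_1\cong T\LMod(L,L)$, to cyclicity and dualizing-ness of $d_T$ in $T$. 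That would complete the proof that $Q_0\stackrel{\phi}{\to}Q_1$ is a Girard couple.

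I expect the routine parts to be the bijectivity of $\pi_1$ and the bookkeeping of the quantale operations. The main obstacle is the dualizing property in $Q_0$: a generic element of $R\otimes_TL$ is not a pure tensor, so one must control the $Q_1$-bimodule residuations of arbitrary elements and extract the dualizing identities for $d$ from those of $d_T$ without losing information in the passage through the tensor quotient. Step $(c)$ above --- where a ``$\leq d_T$'' equivalence has to be upgraded to a genuine equality in $T$ --- is a gentler rehearsal of the same difficulty.
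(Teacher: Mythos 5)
Your first half follows the paper's line for the injectivity of the projection $(\alpha,\beta)\mapsto\alpha$, and your steps $(a)$--$(b)$ for surjectivity are sound. But step $(c)$ invokes the hypothesis that $d_T$ is \emph{dualizing in $T$} (``complementation $t\mapsto t\comp$ is a bijection of $T$''), which is not part of the definition of a Girard triad: the definition only asks that $d_T$ be cyclic and that the induced Galois connection between $L$ and $R$ be a duality. Nothing forces $t=d_T\la (t\ra d_T)$ for $t\in T$, so your upgrade of the equivalence $\alpha(l)r\leq d_T\LRa l\beta(r)\leq d_T$ to the equality $\alpha(l)r=l\beta(r)$ is not justified as written.

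The more serious gap is the second half. You correctly guess $d_Q=\bigvee_{lr\leq d_T}r\otimes l$ and correctly predict that the dualizing identity is the hard point, but you then stop: ``each such check should reduce to cyclicity and dualizing-ness of $d_T$'' is precisely the step that does \emph{not} reduce to checking pure tensors, because residuation into $d_Q$ turns joins into meets, so knowing $(r\otimes l)\ra d_Q$ on pure tensors does not by itself recover an arbitrary $c=\bigvee_i r_i\otimes l_i$ from $c\ra d_Q$. The paper supplies the missing device: the elements $\pi^{l'}_l=\bigvee\{\alpha\in T\LMod (L,L)\mid\alpha(l)\leq l'\}$, the observation that every $\alpha$ equals $\bigwedge_{\alpha(l)\leq l'}\pi^{l'}_l$, and the identity $\pi^{(rt)\comp}_l=\pi^{r\comp}_{tl}$ together exhibit $Q_1\cong T\LMod (L,L)$ as the order dual of $Q_0$, with pure tensors corresponding to the $\pi$'s; the computation $(r\otimes l)(\alpha,\alpha^*)\leq d_Q\LRa\alpha\leq\pi^{r\comp}_l$ then identifies $(r\otimes l)\ra d_Q$ with $\pi^{r\comp}_l$, and the Girard property follows from this anti-isomorphism. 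Without this (or an equivalent control of the residuations of arbitrary elements of $R\otimes_TL$), your proposal does not establish that $\phi:Q_0\to Q_1$ is a Girard couple.
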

\begin{proof}
Let $(\alpha,\beta)\in Q_1$.
Then $\alpha(r)\leq r'\LRa
(r')\comp\alpha(r)=\beta((r')\comp )r\leq d_T
\LRa r\leq\beta((r')\comp)\comp$.
We have proved that $\beta=\alpha^*$ and
hence the assertion.

Put $d_Q=\bigvee_{lr\leq d_T}r\otimes l$.
For $l\in L,t\in T,r\in R$ we have
$lrt\leq d_T\LRa tlr\leq d_T$ by cyclicity,
hence $r\otimes l\leq d_Q\LRa lr\leq d_T$,
regardless how the pure tensor $r\otimes l$
is represented in $R\otimes_TL$.

Now put $\pi^{l'}_l=\bigvee\{ \alpha\in T\LMod (L,L)\mid
\alpha(l)\leq l'\}$
and observe that
$$ \alpha =\bigwedge_{\alpha(l)\leq l'}\pi^{l'}_l $$
for every $\alpha\in T\LMod (L,L)$.
Then $\alpha(tl)\leq r\comp\LRa
\alpha(tl)r=t\alpha(l)r\leq d_T\LRa
\alpha(l)(rt)\leq d_T\LRa \alpha(l)\leq (rt)\comp$
yields that $\pi^{(rt)\comp}_l=\pi^{r\comp}_{tl}$
and thus pure tensors of $R\otimes_TL$ are
duals of $\pi$s.
Since elements of $T\LMod (L,L)$ preserve
arbitrary suprema, we deduce that they are,
as the meets of $\pi$s, organized dually
to joins of pure tensors and hence
$Q_1\cong T\LMod (L,L)$ is dual to $Q_0$.
(This fact was mentioned by A.~Joyal and M.~Tierney
\cite{JoTi84} for commutative $T$.)
Finally, $(r\otimes l)(\alpha,\alpha^*)=
r\otimes\alpha(l)\leq d_Q\LRa
\alpha(l)r\leq d_T\LRa
\alpha(l)\leq r\comp\LRa
\alpha\leq\pi^{r\comp}_l=(r\otimes l)\comp$
establishes the Girard duality.
\end{proof}

Recall from \cite{KrPa02} that a quantale
$Q$ is said to be \emph{strictly faithful}
if $(\forall l\in\LL(Q),r\in\R(Q)\ lqr=lq'r)\Ra q=q'$
for all $q,q'\in Q$ and from \cite{Kr03} that it is
called \emph{distributive} if
$(r\vee q)\wedge(l\vee q)=rl\vee q$ for
all $q,q'\in Q, r\in\R(Q), l\in\LL(Q)$.

\begin{prop}
Let $(L,T,R)$ be a strict Girard triad.
Then $Q_1$ is strictly faithful
and if $T$ is distributive, then $Q_1$ is distributive.
\end{prop}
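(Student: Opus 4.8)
The plan is to reduce both assertions to the arithmetic of $T$, via the identification $Q_1\cong T\LMod(L,L)$ of Theorem~\ref{gir} (so an element $q=(\alpha,\beta)$ of $Q_1$ is determined by $\alpha$, with $\beta=\alpha^*$) and the description of the sided parts from Proposition~\ref{str}(2): the right-sided element $\rho_a:=\phi(a\otimes 1_L)$ of $Q_1$ attached to $a\in R$ acts on $L$ by $m\mapsto(ma)1_L$, the left-sided element $\lambda_b:=\phi(1_R\otimes b)$ attached to $b\in L$ acts by $m\mapsto(m1_R)b$, and a two-sided element $\tau$ corresponds to $(1_L\tau)1_R\in T$ under the isomorphism $\T(Q_1)\cong T$. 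The basic computation is that for any $x=(\alpha_x,\beta_x)\in Q_1$, $a\in R$, $b\in L$, the product $\lambda_b\,x\,\rho_a$ is a two-sided element of $Q_1$ with $T$-value $\alpha_x(b)\,a$ (equivalently $b\,\beta_x(a)$, by the defining relation of $Q_1$): one has $1_L\lambda_b=(1_L1_R)b=e_Tb=b$ by strictness, then $b\,x=\alpha_x(b)$ and $\alpha_x(b)\,\rho_a=(\alpha_x(b)a)1_L$, and finally $\bigl((\alpha_x(b)a)1_L\bigr)1_R=(\alpha_x(b)a)(1_L1_R)=(\alpha_x(b)a)e_T=\alpha_x(b)a$, the first equality being law TLR.

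For strict faithfulness, suppose $l\,q\,r=l\,q'\,r$ for all $l\in\LL(Q_1)$, $r\in\R(Q_1)$, with $q=(\alpha,\beta)$, $q'=(\alpha',\beta')$; taking $l=\lambda_b$, $r=\rho_a$ and applying the basic computation gives $\alpha(b)\,a=\alpha'(b)\,a$ for all $a\in R$, $b\in L$. Then the polars $\{a\mid\alpha(b)a\le d_T\}$ and $\{a\mid\alpha'(b)a\le d_T\}$ coincide, so the Girard duality $L\op\stackrel{\bot}{\cong}R$ --- under which an element of $L$ is recovered from its polar in $R$ --- forces $\alpha(b)=\alpha'(b)$ for all $b$, hence $\alpha=\alpha'$ and, since $\beta=\alpha^*$, $q=q'$. (This sharpens the faithfulness of $Q_1$ recorded after Proposition~\ref{str}.)

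For distributivity, assume $T$ is distributive; as $T$ is strictly two-sided by Proposition~\ref{str}(2), this reads $(t_1\vee t)\wedge(t_2\vee t)=t_1t_2\vee t$ for all $t,t_1,t_2\in T$. Fix $q=(\alpha,\beta)\in Q_1$, a right-sided $r=\rho_a$ and a left-sided $l=\lambda_b$; then $rl=\phi(a\otimes b)$ acts on $L$ by $m\mapsto(ma)b$, and $rl\vee q\le(r\vee q)\wedge(l\vee q)$ holds in the unital (hence semiunital) quantale $Q_1$. For the reverse inclusion it suffices, by strict faithfulness, to show that $l'\bigl[(r\vee q)\wedge(l\vee q)\bigr]r'$ and $l'(rl\vee q)r'$ have the same $T$-value for all left-sided $l'=\lambda_{b'}$ and right-sided $r'=\rho_{a'}$. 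By the basic computation the second has $T$-value $\bigl((b'a)b\vee\alpha(b')\bigr)a'=\bigl((b'a)b\bigr)a'\vee\alpha(b')a'$. For the first, joins in $Q_1\cong T\LMod(L,L)$ are pointwise, so the $L$-component $\gamma$ of $(r\vee q)\wedge(l\vee q)$ obeys $\gamma(b')\le\bigl((b'a)1_L\vee\alpha(b')\bigr)\wedge\bigl((b'1_R)b\vee\alpha(b')\bigr)$ in $L$; applying $(-)a'\colon L\to T$ (join-preserving, monotone), then distributivity of $T$ with common term $\alpha(b')a'$, then law TLR, and the bounds $1_La'\le 1_L1_R=e_T$, $b'1_R\le 1_L1_R=e_T$, one obtains
\[
\gamma(b')a'\ \le\ \bigl((b'a)(1_La')\bigr)\bigl((b'1_R)(ba')\bigr)\vee\alpha(b')a'\ \le\ \bigl((b'a)b\bigr)a'\vee\alpha(b')a'.
\]
Hence $l'\bigl[(r\vee q)\wedge(l\vee q)\bigr]r'\le l'(rl\vee q)r'$ in $T$; with the easy inclusion these are equal for all $l',r'$, and strict faithfulness gives $(r\vee q)\wedge(l\vee q)=rl\vee q$.

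The step I expect to be the main obstacle is the meet $(r\vee q)\wedge(l\vee q)$ itself: $T\LMod(L,L)$ need not be closed under pointwise meets (it would be if every action $t(-)$ preserved meets), so this meet cannot be read off coordinatewise. The device is to avoid computing it at all --- using only that it lies below the pointwise meet --- and to push the resulting inequality into $T$, where the genuine distributivity hypothesis applies, via strict faithfulness and $\T(Q_1)\cong T$. What remains is bookkeeping: checking that each reassociation is an instance of a triad law (predominantly TLR), and that the absorbing inequalities $1_La'\le e_T$, $b'1_R\le e_T$ are available --- which they are precisely because the triad is strict, i.e.\ $1_L1_R=e_T$.
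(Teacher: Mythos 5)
Your proof is correct and follows essentially the same strategy as the paper's: strict faithfulness is obtained from the separating property of the Girard complement, and is then used to push $(r\vee q)\wedge(l\vee q)$ down into $T\cong\T(Q_1)$, where distributivity of $T$ together with absorption of the sided factors finishes the argument. The difference is only presentational --- you compute explicitly in the model $T\LMod(L,L)$ using the $T$-values from Proposition~\ref{str}(2), whereas the paper runs the same inequalities abstractly inside $Q_1$ (its bound $l'rr'\le l'r$ playing the role of your $1_La'\le e_T$).
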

\begin{proof}
By \ref{str} we can identify $L$ with $\LL(Q)$
and $R$ with $\R(Q)$.
If $r\ngeq r'$ then $r\comp r\leq d_T$
but $r\comp r'\nleq d_T$, hence
$(\forall l\in L\ lr=lr')\Ra r=r'$ for every $r,r'\in R$.
Thus from $lqr=lq'r$ for all $r\in R,l\in L$
we derive $qr=q'r$ for all $r\in R$.
By a similar argument one can also derive $lq=lq'$
for all $l\in L$.
We have that $q$ and $q'$ are not
distinguished either on $L$ or on $R$
and thus they are equal. 

Assume now that $T$ is distributive and
$l'(rl\vee q)r'\leq t$
for $l,l'\in L,r,r'\in R,q\in Q$,
thus $l'rlr'\leq t$ and $l'qr'\leq t$.
Then $l'((r\vee q)\wedge(l\vee q))r'\leq
(l'rr'\vee l'qr')\wedge(l'lr'\vee l'qr')\leq
(l'r\vee t)\wedge(lr'\vee t)=l'rlr'\vee t=t$.
From strict faithfulness we obtain
$(r\vee q)\wedge(l\vee q)\leq rl\vee q$.
The converse inequality always holds.
\end{proof}

\begin{remark}
Since $T$ is a frame in examples (3 -- 8) of \ref{ex},
the triads are central and solutions $Q_1$
are distributive.

Involutive triads arise from involutive rings
(in particular C*-algebras), self-dual sup-lattices,
symmetric Galois connections, and quantum frames.

Triad $(S\op,\Two,S)$ is Girard for every sup-lattice
$S$ and $\phi:Q_0\to Q_1$ is the Girard couple studied
in \cite{EgKr08}.
More generally, the triad $(M,Z(M),M)$ from \ref{ex} (8)
is Girard.

Any W*-algebra provides a Girard triad of
ideals closed in normal topology.
The quantale $Q_1$ obtained from non-atomistic
W*-algebra is distributive but non-spatial
(because it does not have enough maximal right-sided
elements) and represents a natural non-commutative analogy
of a pointfree locale. 
W*-algebras with a non-trivial center produce examples of
strictly faithful quantales (with idempotent right- and left-sided
elements)
which are not \emph{simple} (see \cite{PeRo97}). 
\end{remark}

\bibliography{david}
\bibliographystyle{acm}

\end{document}